\newtheorem{thm}{Theorem}[section]
\newtheorem{conj}[thm]{Conjecture}
\newtheorem{lem}[thm]{Lemma}
\numberwithin{equation}{section}
\begin{document}

\nocite{*}
\begin{center}
{\Large\bf On a conjecture related to integer-valued polynomials}
\end{center}

\vskip 2mm \centerline{Victor J. W. Guo}
\begin{center}
{\footnotesize School of Mathematics and Statistics, Huaiyin Normal
University, Huai'an, Jiangsu 223300,
 People's Republic of China\\
{\tt jwguo@math.ecnu.edu.cn} }

\end{center}


\vskip 0.7cm \noindent{\bf Abstract.} Using the following $_4F_3$ transformation formula
$$
\sum_{k=0}^{n}{-x-1\choose k}^2{x\choose n-k}^2=\sum_{k=0}^{n}{n+k\choose 2k}{2k\choose k}^2{x+k\choose 2k},
$$
which can be proved by Zeilberger's algorithm, we confirm some special cases of a recent conjecture of Z.-W. Sun on integer-valued polynomials.

\vskip 3mm \noindent {\it Keywords}: Zeilberger's algorithm; Chu-Vandermonde summation; integer-valued polynomials; multi-variable Schmidt polynomials

\vskip 2mm
\noindent{\it MR Subject Classifications}: 33C20, 11A07, 11B65, 05A10

\section{Introduction}
Recall that a polynomial $P(x)\in\mathbb{Q}[x]$ is called {\it integer-valued}, if $P(x)\in\mathbb{Z}$ for all $x\in\mathbb{Z}$.
During the past few years, integer-valued polynomials have been investigated by several authors (see, for example, \cite{Guo-int,Liu,Sun2017}).
Recently, Z.-W. Sun \cite[Conjectures 35(i)]{Sun19} proposed the following conjecture.
\begin{conj}[Z.-W. Sun]\label{conj:sun-1}
Let $l,m,n$ be positive integers and $\varepsilon=\pm1$. Then the polynomial
$$
\frac{1}{n}\sum_{k=0}^{n-1}\varepsilon^k (2k+1)^{2l-1}\sum_{j=0}^{k}{-x-1\choose j}^m{x\choose k-j}^m
$$
is integer-valued.
\end{conj}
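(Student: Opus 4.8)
The plan is to split the problem into a combinatorial/polynomial part and a number-theoretic part. First I would record that for every fixed $k$ the inner sum
\[
S_m(x,k):=\sum_{j=0}^{k}\binom{-x-1}{j}^m\binom{x}{k-j}^m
\]
is a sum of products of integer-valued polynomials, hence integer-valued; since $\varepsilon^k(2k+1)^{2l-1}\in\Z$, the numerator $T_n(x):=\sum_{k=0}^{n-1}\varepsilon^k(2k+1)^{2l-1}S_m(x,k)$ is integer-valued as well. Thus the conjecture is equivalent to the divisibility $n\mid T_n(a)$ for every $a\in\Z$.

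Second, I would exploit symmetry to choose the right basis. Swapping $j\leftrightarrow k-j$ gives $S_m(x,k)=S_m(-1-x,k)$, so $T_n$ is symmetric about $x=-\tfrac12$. The polynomials $\binom{x+i}{2i}$ $(i\ge0)$ are integer-valued, symmetric about $-\tfrac12$, and — since $\binom{x+i}{2i}$ vanishes at $x=0,1,\dots,i-1$ and equals $1$ at $x=i$ — they form a $\Z$-basis of the module of integer-valued polynomials symmetric about $-\tfrac12$ (back-substitution on the values at $x=0,1,2,\dots$ yields integer coordinates). I would then expand
\[
S_m(x,k)=\sum_{i\ge0}c_m(k,i)\binom{x+i}{2i},\qquad c_m(k,i)\in\Z,
\]
the coefficients being forced to be integers by the basis property. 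For $m=1$ the Chu--Vandermonde summation gives $S_1(x,k)=(-1)^k$, i.e.\ $c_1(k,0)=(-1)^k$; for $m=2$ the quoted $_4F_3$ transformation gives $c_2(k,i)=\binom{k+i}{2i}\binom{2i}{i}^2=\binom{2i}{i}\binom{k+i}{i}\binom{k}{i}$. For general $m$ one obtains $c_m(k,i)$ by iterating the Chu--Vandermonde convolution, equivalently via the multi-variable Schmidt polynomials named in the keywords.

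Because $\{\binom{x+i}{2i}\}$ is a $\Z$-basis, $\tfrac1nT_n$ is integer-valued if and only if, for every $i$, its $i$-th coordinate is divisible by $n$; that is,
\[
n\ \Big|\ C_i:=\sum_{k=0}^{n-1}\varepsilon^k(2k+1)^{2l-1}\,c_m(k,i).
\]
The engine driving this divisibility is the involution $k\mapsto n-1-k$, under which $2k+1\mapsto 2n-(2k+1)\equiv-(2k+1)\pmod n$, so the odd power $(2k+1)^{2l-1}$ changes sign modulo $n$ and paired terms tend to cancel, while for odd $n$ the middle term carries the factor $n$. The residual discrepancy comes from the binomial part of $c_m(k,i)$, and here the central-binomial factors in $c_m$ (the $\binom{2i}{i}^2$ for $m=2$) are exactly what supplies the missing divisibility, which one verifies prime-by-prime through Kummer's and Lucas' theorems. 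For $m\le2$ this is the concrete statement $n\mid\binom{2i}{i}^2\sum_{k=0}^{n-1}\varepsilon^k(2k+1)^{2l-1}\binom{k+i}{2i}$, whose $i=0$ instance, for both signs of $\varepsilon$, settles $m=1$.

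The main obstacle I expect lies in the general-$m$ case of the last two steps. Writing $c_m(k,i)$ in closed form with integer coefficients is routine in principle by creative telescoping, but for $m\ge3$ there is no single-sum analogue of the quoted $_4F_3$ formula: $c_m(k,i)$ becomes a genuine multiple (nested) sum, and the difficulty is to control its residue modulo $n$ and the accompanying central-binomial factors well enough to run the pairing argument uniformly in $m$. Thus the reduction above is valid for all $m$, and completing the conjecture amounts precisely to furnishing, for every $m$, the multi-variable Schmidt-polynomial expansion together with the arithmetic of its coefficients modulo $n$ — inputs that the single transformation formula provides cleanly only for $m\in\{1,2\}$.
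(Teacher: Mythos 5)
Your reduction is sound and, in fact, coincides with the paper's own route for the cases it settles: the $\mathbb{Z}$-basis $\bigl\{\binom{x+i}{2i}\bigr\}_{i\geqslant 0}$ of integer-valued polynomials symmetric about $-\frac12$ is exactly the mechanism behind the paper's substitution $x_i=\binom{2i}{i}\binom{x+i}{2i}$ into the multi-variable Schmidt polynomials, and your closed form $c_2(k,i)=\binom{k+i}{2i}\binom{2i}{i}^2$ is the same $_4F_3$ transformation (Lemma \ref{lem:1}). The equivalence ``\eqref{eq:main-1} is integer-valued $\Leftrightarrow$ $n\mid C_i$ for all $i$'' is correctly argued via unitriangularity of values at $x=0,1,2,\dots$. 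You are also right, and honest, that nothing in this framework reaches $m\geqslant 3$; the paper likewise proves only $m=1$ (by citing Mao) and $m=2$, leaving the general conjecture open.

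The genuine gap is at the arithmetic core: the divisibility $n\mid C_i=\sum_{k=0}^{n-1}\varepsilon^k(2k+1)^{2l-1}c_m(k,i)$ is asserted but never proved, and the proposed engine does not run as described. The involution $k\mapsto n-1-k$ does negate $(2k+1)^{2l-1}$ modulo $n$, but it does not preserve $\binom{k+i}{2i}$ (for $k<i$ the term vanishes while its partner need not), so the paired terms do not cancel modulo $n$ and the entire burden shifts to the central binomial factors. That burden is real: already for $n=2$, $i=1$, $l=1$, $\varepsilon=1$ one has
\begin{equation*}
\sum_{k=0}^{1}(2k+1)\binom{k+1}{2}=3\not\equiv 0\pmod 2,
\end{equation*}
so without the factor $\binom{2i}{i}$ the statement is simply false, and your sketch offers no actual Kummer--Lucas computation extracting the missing $p$-adic valuation from $\binom{2i}{i}^2$; carrying that out uniformly in $n$, $i$, $l$, $\varepsilon$ is precisely the hard content. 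The paper fills this hole by quoting the Chen--Guo congruence (Lemma \ref{lem:2}), which gives the stronger fact that a \emph{single} central binomial factor suffices: $n\mid\sum_{k=0}^{n-1}\varepsilon^k(2k+1)^{2l-1}\binom{k+i}{2i}\binom{2i}{i}$ for every $i$. The same criticism applies in miniature to your $i=0$ instance claimed to settle $m=1$: for even $n$ and alternating sign the pairing argument does not close, which is why the paper invokes Mao's Lemmas 2.3 and 2.4 rather than a bare involution. So your proposal is the right skeleton --- indeed the paper's skeleton --- but it is missing the one nontrivial divisibility lemma on which everything rests.
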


By the Chu-Vandermonde summation formula, we have
$$
\sum_{j=0}^{k}{-x-1\choose j}{x\choose k-j}={-1\choose k}=(-1)^k.
$$
Thus, by \cite[Lemmas 2.3 and 2.4]{Mao}, we see that Conjecture \ref{conj:sun-1} is true for $m=1$.
In this note, we shall confirm Conjecture \ref{conj:sun-1}  for $m=2$.

\begin{thm}\label{thm:main-1}
Let $l$ and $n$ be positive integers and $\varepsilon=\pm1$. Then the polynomial
\begin{equation}
\frac{1}{n}\sum_{k=0}^{n-1}\varepsilon^k (2k+1)^{2l-1}\sum_{j=0}^{k}{-x-1\choose j}^2{x\choose k-j}^2 \label{eq:main-1}
\end{equation}
is integer-valued.
\end{thm}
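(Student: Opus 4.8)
The plan is to use the quoted transformation to collapse the inner sum over $j$ into a single sum and then to extract coefficients against an integer-valued basis. Applying the $_4F_3$ formula with its $n$ replaced by $k$ gives $\sum_{j=0}^{k}\binom{-x-1}{j}^2\binom{x}{k-j}^2=\sum_{i=0}^{k}\binom{k+i}{2i}\binom{2i}{i}^2\binom{x+i}{2i}$, so after interchanging the order of summation the polynomial \eqref{eq:main-1} takes the form $\sum_{i=0}^{n-1}c_i\binom{x+i}{2i}$ with
$$
c_i=\frac1n\binom{2i}{i}^2\sum_{k=i}^{n-1}\varepsilon^k(2k+1)^{2l-1}\binom{k+i}{2i}.
$$
Because each $\binom{x+i}{2i}$ is integer-valued, it suffices to show $c_i\in\Z$ for every $i$; this is in fact equivalent to Theorem~\ref{thm:main-1}, since $\binom{x+i}{2i}$ vanishes at $x=-i,\dots,i-1$ and equals $1$ at $x=i$, so evaluating at $x=0,1,2,\dots$ forces $c_0,c_1,\dots\in\Z$ in turn. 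The theorem is thereby reduced to the divisibility $n\mid\binom{2i}{i}^2B_i$ for $0\le i\le n-1$, where $B_i:=\sum_{k=i}^{n-1}\varepsilon^k(2k+1)^{2l-1}\binom{k+i}{2i}$.

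I would establish this divisibility from the structure of $\binom{k+i}{2i}$ as a function of $k$. Since $(2i)!\binom{k+i}{2i}=\prod_{j=1}^{i}\bigl(k(k+1)-j(j-1)\bigr)$ is an integer polynomial in $k(k+1)$, the involution $k\mapsto n-1-k$ of $\{0,1,\dots,n-1\}$ fixes this factor modulo $n$ (as $k(k+1)\equiv(n-1-k)(n-k)\pmod n$), sends $(2k+1)^{2l-1}$ to $(2n-(2k+1))^{2l-1}\equiv-(2k+1)^{2l-1}\pmod n$, and multiplies $\varepsilon^k$ by $\varepsilon^{n-1}$. Summing over the images shows $(1+\varepsilon^{n-1})(2i)!\,B_i\equiv0\pmod n$, so whenever $\varepsilon^{n-1}=1$ we already obtain $n\mid 2(2i)!\,B_i$ (the fixed point $k=(n-1)/2$, present when $n$ is odd, contributes $0$ because there $2k+1=n$). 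For the remaining case $\varepsilon=-1$ with $n$ even I would instead use that $(2k+1)^2=4k(k+1)+1$ makes $(2k+1)^{2l-1}\binom{k+i}{2i}$ an odd polynomial in $2k+1$ over the denominator $(2i)!\,4^{i}$; expanding it writes $(2i)!\,4^{i}B_i$ as an integer combination of the sums $\sum_{k=0}^{n-1}\varepsilon^k(2k+1)^{2L-1}$ with $l\le L\le l+i$, each divisible by $n$ by the $m=1$ case recorded above (Mao's Lemmas 2.3 and 2.4, which give $n\mid\sum_{k=0}^{n-1}\delta^k(2k+1)^{2L-1}$ for both signs $\delta=\pm1$).

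The hard part is the passage from these conclusions, $n\mid(2i)!\,4^{i}B_i$ (and $n\mid 2(2i)!\,B_i$ in the generic case), to the required $n\mid\binom{2i}{i}^2B_i$: the multiplier $(2i)!$ is $2$-adically strictly larger than $\binom{2i}{i}^2$, so no prefactor identity makes the implication automatic, and the missing divisibility must come from $B_i$ itself. I would settle this prime by prime, comparing $v_p(n)$ with $v_p\bigl(\binom{2i}{i}^2\bigr)+v_p(B_i)$ and using Kummer's theorem (which evaluates $v_p\binom{2i}{i}$ as the number of carries in adding $i$ to itself in base $p$) to show that the surplus in $(2i)!\,4^{i}$ over $\binom{2i}{i}^2$ is always matched by extra $p$-divisibility of $B_i$. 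The decisive and most delicate case is $p=2$, where the surplus equals $4i-3s_2(i)$; I expect that closing this gap, especially for $\varepsilon=-1$ and $n$ even where the clean involution argument is unavailable, will require a sharpened, valuation-carrying version of the $m=1$ divisibility rather than its bare mod-$n$ statement.
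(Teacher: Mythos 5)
Your reduction is sound as far as it goes: applying the $_4F_3$ transformation \eqref{eq:trans} with $n$ replaced by $k$, interchanging summations, and using the evaluation-at-$x=0,1,2,\dots$ argument to show that integrality of \eqref{eq:main-1} is equivalent to $n\mid\binom{2i}{i}^2B_i$ for $0\le i\le n-1$ is all correct, and it is essentially the same first step as the paper (which uses the same identity, proved there by Zeilberger's algorithm as Lemma~\ref{lem:1}). But the proof is not complete, and you say so yourself: the divisibility $n\mid\binom{2i}{i}^2B_i$ is never established. Your involution $k\mapsto n-1-k$ and your expansion of $4^i(2i)!\binom{k+i}{2i}=\prod_{j=1}^{i}\bigl((2k+1)^2-(2j-1)^2\bigr)$ in odd powers of $2k+1$ are both correct and yield $n\mid 2(2i)!\,B_i$ (when $\varepsilon^{n-1}=1$) and $n\mid 4^i(2i)!\,B_i$, but as you note these are strictly weaker at $p=2$, where the surplus $v_2(4^i(2i)!)-v_2(\binom{2i}{i}^2)=4i-3s_2(i)$ is positive for every $i\ge1$; the final paragraph of your proposal is a plan for closing this gap, not an argument. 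So the decisive step is missing.

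That missing step is exactly the content the paper imports from outside: Lemma~\ref{lem:2}, a special case of a congruence of Chen and Guo on multi-variable Schmidt polynomials, which asserts that the coefficient of $x_i$ in $\sum_{k=0}^{n-1}\varepsilon^k(2k+1)^{2l-1}S_k(x_0,\dots,x_k)$, namely $\binom{2i}{i}B_i$, is divisible by $n$. Note this is \emph{stronger} than what your basis requires (one factor $\binom{2i}{i}$ rather than two); the paper can get away with it because it expands in the integer-valued quantities $x_i=\binom{2i}{i}\binom{x+i}{2i}$ rather than in $\binom{x+i}{2i}$, absorbing the second central binomial coefficient into the basis element. If you want a self-contained proof along your lines, you would need to prove $n\mid\binom{2i}{i}B_i$ (or at least $n\mid\binom{2i}{i}^2B_i$) directly, which is a genuinely nontrivial divisibility and the real substance of the theorem; citing or reproving the Chen--Guo congruence is the natural way to supply it.
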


We shall also prove the following result, which confirms the $l=1$ cases of \cite[Conjectures 35(ii)]{Sun19}.
\begin{thm}\label{thm:main-2}
Let $n$ be a positive integer. Then the polynomial
\begin{align}
\frac{1}{n^2}\sum_{k=0}^{n-1}(2k+1)\sum_{j=0}^{k}{-x-1\choose j}^2{x\choose k-j}^2 \label{n2-1}
\end{align}
is integer-valued.
\end{thm}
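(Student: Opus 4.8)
The plan is to feed the $_4F_3$ transformation from the abstract into \eqref{n2-1} and then exchange the order of summation, turning the expression into an explicit $\mathbb{Q}$-linear combination of the integer-valued polynomials $\binom{x+i}{2i}$. Writing $S_k(x)=\sum_{j=0}^{k}\binom{-x-1}{j}^2\binom{x}{k-j}^2$, the transformation gives $S_k(x)=\sum_{i=0}^{k}\binom{k+i}{2i}\binom{2i}{i}^2\binom{x+i}{2i}$. Substituting this and swapping the $k$- and $i$-sums (the constraint being $0\le i\le k\le n-1$), the polynomial \eqref{n2-1} becomes $\sum_{i=0}^{n-1}c_i\binom{x+i}{2i}$ with
$$c_i=\frac{1}{n^2}\binom{2i}{i}^2 T_i(n),\qquad T_i(n)=\sum_{k=i}^{n-1}(2k+1)\binom{k+i}{2i}.$$
Since each $\binom{x+i}{2i}$ is itself integer-valued, it suffices to show that every coefficient $c_i$ is an integer.

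The core of the argument is a closed evaluation of $T_i(n)$. After the shift $k=i+j$ (so that $j$ runs from $0$ to $M:=n-1-i$), I would split $2k+1=(2i+1)+2j$ and apply the two hockey-stick identities $\sum_{j=0}^{M}\binom{2i+j}{2i}=\binom{n+i}{2i+1}$ and $j\binom{2i+j}{2i}=(2i+1)\binom{2i+j}{2i+1}$; a short simplification collapses the two resulting binomials into
$$T_i(n)=\frac{(2i+1)\,n}{i+1}\binom{n+i}{2i+1}.$$
Next I would use that $n$ is the central factor of the numerator of $\binom{n+i}{2i+1}$, i.e. $\binom{n+i}{2i+1}=\dfrac{n}{(2i+1)!}\prod_{j=1}^{i}(n^2-j^2)$. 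Feeding this into $c_i$, the two explicit factors of $n$ cancel the denominator $n^2$ exactly, and after rewriting $\frac{1}{(i+1)(2i)!}\binom{2i}{i}^2=\frac{C_i}{(i!)^2}$ and $\frac{1}{(i!)^2}\prod_{j=1}^{i}(n^2-j^2)=\binom{n+i}{i}\binom{n-1}{i}$ one obtains the clean formula
$$c_i=C_i\binom{n+i}{i}\binom{n-1}{i},\qquad C_i=\frac{1}{i+1}\binom{2i}{i}.$$

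This final expression is a product of the Catalan number $C_i$ with two ordinary binomial coefficients, so $c_i\in\mathbb{Z}$ for every $i$; consequently \eqref{n2-1} equals $\sum_{i=0}^{n-1}C_i\binom{n+i}{i}\binom{n-1}{i}\binom{x+i}{2i}$, a $\mathbb{Z}$-linear combination of integer-valued polynomials, and is therefore integer-valued. I expect the genuinely delicate point to be the cancellation of the full $n^2$: from the closed form $T_i(n)=\frac{(2i+1)n}{i+1}\binom{n+i}{2i+1}$ only one power of $n$ is immediately visible, and recovering the second power requires the observation that the central binomial $\binom{n+i}{2i+1}$ itself carries a factor $n$ through its middle term. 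Everything else reduces to routine manipulation of binomial sums.
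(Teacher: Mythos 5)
Your argument is correct and follows essentially the same route as the paper: both substitute the $_4F_3$ transformation, interchange the order of summation, evaluate $\sum_{m=k}^{n-1}(2m+1)\binom{m+k}{2k}$ in closed form (the paper's Lemma~3.1, proved by induction, versus your hockey-stick derivation), and arrive at the identical integral coefficient $\frac{1}{k+1}\binom{2k}{k}\binom{n-1}{k}\binom{n+k}{k}$ of $\binom{x+k}{2k}$. The only cosmetic difference is that the paper extracts the second factor of $n$ via $\frac{1}{n}\binom{n}{k+1}=\frac{1}{k+1}\binom{n-1}{k}$, whereas you do so through the product formula for $\binom{n+k}{2k+1}$.
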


\section{Proof of Theorem \ref{thm:main-1}}
We first require the following $_4F_3$ transformation formula.
\begin{lem}\label{lem:1}
Let $n$ be a non-negative integer. Then
\begin{equation}
\sum_{k=0}^{n}{-x-1\choose k}^2{x\choose n-k}^2=\sum_{k=0}^{n}{n+k\choose 2k}{2k\choose k}^2{x+k\choose 2k}.
\label{eq:trans}
\end{equation}
\end{lem}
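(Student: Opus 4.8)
The plan is to reduce \eqref{eq:trans} to an identity between two explicit $n$-sums and then prove it by creative telescoping, as the statement suggests. First I would rewrite the left-hand side: since $\binom{-x-1}{k}=(-1)^k\binom{x+k}{k}$, squaring removes the sign and the left-hand side becomes $\sum_{k=0}^{n}\binom{x+k}{k}^2\binom{x}{n-k}^2$. Both sides are polynomials in $x$ of degree $2n$, and each is invariant under $x\mapsto -x-1$: on the left one checks $\binom{x+k}{k}^2\mapsto\binom{x}{k}^2$ and $\binom{x}{n-k}^2\mapsto\binom{x+n-k}{n-k}^2$, so reindexing $k\mapsto n-k$ restores the sum, while on the right each $\binom{x+k}{2k}$ is individually fixed. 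Hence it suffices to establish the polynomial identity, and we are free to evaluate at any convenient values of $x$.

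Write $L_n=\sum_{k}\binom{x+k}{k}^2\binom{x}{n-k}^2$ and $R_n=\sum_{k}\binom{n+k}{2k}\binom{2k}{k}^2\binom{x+k}{2k}$, regarded as sequences in $n$ with $x$ a symbolic parameter. Applying Zeilberger's algorithm to the summand $G(n,k)=\binom{n+k}{2k}\binom{2k}{k}^2\binom{x+k}{2k}$ of $R_n$ produces a linear recurrence $\sum_{i}a_i(n,x)\,R_{n+i}=0$ with polynomial coefficients, certified by a rational function $C(n,k)$ with $\sum_{i}a_i(n,x)\,G(n+i,k)=C(n,k+1)G(n,k+1)-C(n,k)G(n,k)$; summing over $k$ and noting that the boundary terms telescope to zero yields the recurrence for $R_n$. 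Running the same algorithm on the summand $\binom{x+k}{k}^2\binom{x}{n-k}^2$ of $L_n$ gives a recurrence for $L_n$. After checking that the two recurrences coincide, one only has to verify \eqref{eq:trans} for the finitely many initial values of $n$ below the order of the recurrence, a direct finite computation, and induction completes the proof.

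The hard part will be two-fold. The sum $L_n$ is a Cauchy convolution in which $n$ appears only through $n-k$, so some care is needed when applying creative telescoping and, especially, when controlling the boundary contributions at $k=0$ and $k=n$; and one must confirm that the recurrences generated for $L_n$ and $R_n$ agree in both order and coefficients, which is where the real content lies. If one prefers to avoid the recurrences altogether, the invariance noted above gives a self-contained alternative: both sides lie in the $(n+1)$-dimensional space of polynomials fixed by $x\mapsto -x-1$ and of degree at most $2n$, on which $\{\binom{x+k}{2k}\}_{k=0}^{n}$ is a basis that is triangular under evaluation at $x=0,1,\dots,n$ (because $\binom{m+k}{2k}=0$ for $m<k$). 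Solving this triangular system expresses the coefficients of $L_n$ in that basis, and matching them with $\binom{n+k}{2k}\binom{2k}{k}^2$ reduces the lemma to a finite family of elementary binomial evaluations.
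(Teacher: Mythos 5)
Your main argument---apply Zeilberger's algorithm to the summands of both sides, verify that the resulting recurrences in $n$ coincide, and then check the finitely many initial cases---is exactly the paper's proof, which finds that both sides satisfy the same second-order recurrence and checks $n=0,1$. The symmetry under $x\mapsto -x-1$ and the triangular-basis alternative you sketch are correct but not needed for this route.
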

\begin{proof}Denote the left-hand side or the right-hand side of \eqref{eq:trans} by $S_n(x)$. Applying Zeilberger's algorithm (see \cite{AM,PWZ}),
 we obtain
\begin{align*}
(n+2)^3S_{n+2}(x)-(2n+3)(n^2+2x^2+3n+2x+3)S_{n+1}(x)+(3n^2+3n+1)S_n(x)=0.
\end{align*}
That is to say, both sides of \eqref{eq:trans} satisfy the same recurrence relation of order $2$.
Moreover, the two sides of \eqref{eq:trans} are equal for $n=0,1$. This completes the proof.
\end{proof}

\noindent{\it Remark.} Using Zeilberger's algorithm, Z.-W. Sun \cite[Eq. (3.1)]{Sun2012} found the following identity:
\begin{equation}
16^n \sum_{k=0}^{n}{-1/2\choose k}^2{-1/2\choose n-k}^2
=\sum_{k=0}^{n}{2k\choose k}^3{k\choose n-k}(-16)^{n-k}, \label{eq:one}
\end{equation}
and he \cite[Eq. (3.1)]{Sun2014} gave the following formula:
\begin{equation}
64^n \sum_{k=0}^{n}{-1/4\choose k}^2{-3/4\choose n-k}^2
=\sum_{k=0}^{n}{2k\choose k}^3{2n-2k\choose n-k}16^{n-k}.  \label{eq:two}
\end{equation}
Here we point out that, for $x=-1/2$ and $-3/4$, Eq. \eqref{eq:trans} gives identities different from \eqref{eq:one} and \eqref{eq:two}.

In \cite{CG}, Chen and the author introduced the multi-variable Schmidt polynomials
$$
S_{n}(x_0,\ldots,x_{n})=\sum_{k=0}^n {n+k \choose 2k}{2k\choose k} x_k.
$$
In order to prove Theorem \ref{thm:main-1}, we also need the following result, which
is a special case of the last congruence in \cite[Section 4]{CG}.

\begin{lem}\label{lem:2}
Let $l$ and $n$ positive integers and $\varepsilon=\pm1$. Then all the coefficients in
\begin{align*}
\sum_{k=0}^{n-1}\varepsilon^k (2k+1)^{2l-1}S_{k}(x_0,\ldots,x_{k}).
\end{align*}
are multiples of $n$.
\end{lem}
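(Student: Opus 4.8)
The goal is to prove Lemma~\ref{lem:2}: that $n$ divides every coefficient of
\[
\sum_{k=0}^{n-1}\varepsilon^k (2k+1)^{2l-1}S_k(x_0,\ldots,x_k),
\]
where $S_k(x_0,\ldots,x_k)=\sum_{j=0}^{k}\binom{k+j}{2j}\binom{2j}{j}x_j$. Since this is described as a special case of a congruence proved in \cite{CG}, the honest approach is to extract that statement directly; but let me sketch how I would prove it from scratch.

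The plan is to interchange the order of summation so that the expression is organized by the variables $x_j$. Writing the sum as $\sum_{j=0}^{n-1}c_j(n)\,x_j$, the coefficient of $x_j$ is
\[
c_j(n)=\binom{2j}{j}\sum_{k=j}^{n-1}\varepsilon^k (2k+1)^{2l-1}\binom{k+j}{2j}.
\]
Thus the lemma reduces to the single-variable claim that $n\mid c_j(n)$ for each fixed $j$ with $0\le j\le n-1$. This is exactly the kind of divisibility handled by \cite[Lemmas 2.3 and 2.4]{Mao}, which the excerpt already invokes for the $m=1$ case; those lemmas establish that sums of the form $\sum_{k=0}^{n-1}\varepsilon^k(2k+1)^{2l-1}P(k)$, for suitable polynomial or binomial weights $P(k)$, are divisible by $n$. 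The first step, then, is to verify that the inner sum $\sum_{k=j}^{n-1}\varepsilon^k(2k+1)^{2l-1}\binom{k+j}{2j}$ falls within the scope of those lemmas after a shift of index.

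Concretely, I would substitute $k=j+i$ to recenter, turning $\binom{k+j}{2j}=\binom{2j+i}{2j}=\binom{2j+i}{i}$ into a polynomial in $i$ of degree $2j$, and rewrite $(2k+1)^{2l-1}=(2i+2j+1)^{2l-1}$ as a polynomial in $i$. The sum then becomes $\varepsilon^j\sum_{i=0}^{n-1-j}\varepsilon^i Q(i)$ for an explicit polynomial $Q$. The two cases $\varepsilon=1$ and $\varepsilon=-1$ behave differently: for $\varepsilon=1$ one uses that $\sum_{i=0}^{n-1}i^r$ is, after accounting for the Bernoulli-number denominators, divisible by $n$ in the appropriate integer-valued sense, while for $\varepsilon=-1$ the alternating-sum identities of \cite{Mao} (built around telescoping of $(2k+1)^{2l-1}$) supply the divisibility. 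The key point is that the odd exponent $2l-1$ is precisely what makes the alternating sums telescope and the non-alternating sums vanish modulo $n$, which is why the lemma is stated only for this parity.

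The main obstacle I anticipate is the truncation: the inner sum runs up to $n-1$ rather than over a full period, and $j$ can be as large as $n-1$, so I must ensure the divisibility is uniform in $j$ and not merely an artifact of a complete sum. I expect this to be resolved by noting that $\binom{k+j}{2j}$ vanishes for $k<j$, so the lower limit can be pushed down to $k=0$ without changing the value; the sum genuinely runs $\sum_{k=0}^{n-1}$, and the congruence machinery of \cite{Mao} applies verbatim with the weight $(2k+1)^{2l-1}\binom{k+j}{2j}$ in place of $(2k+1)^{2l-1}$. Since the binomial factor $\binom{2j}{j}$ pulled out in $c_j(n)$ is an integer, establishing $n\mid\sum_{k=0}^{n-1}\varepsilon^k(2k+1)^{2l-1}\binom{k+j}{2j}$ for each $j$ completes the argument; summing over $j$ then gives the lemma. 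For a published note the cleanest route is simply to cite the relevant congruence in \cite[Section~4]{CG} as the excerpt does, but the above reduction is the proof I would write out in full.
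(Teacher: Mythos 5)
There is a genuine gap. The paper does not prove Lemma~\ref{lem:2} at all: it is quoted as a special case of the last congruence in \cite[Section 4]{CG}, so the only ``proof'' in the paper is that citation, and your reduction by interchanging the order of summation (coefficient of $x_j$ equal to $\binom{2j}{j}\sum_{k=j}^{n-1}\varepsilon^k(2k+1)^{2l-1}\binom{k+j}{2j}$) is the right way to set the problem up. The fatal step is where you pull the integer factor $\binom{2j}{j}$ out and propose to prove the stronger statement $n\mid\sum_{k=0}^{n-1}\varepsilon^k(2k+1)^{2l-1}\binom{k+j}{2j}$. That statement is false: take $\varepsilon=1$, $l=1$, $j=1$, $n=2$; then $\sum_{k=0}^{1}(2k+1)\binom{k+1}{2}=1\cdot 0+3\cdot 1=3$, which is not divisible by $2$. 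The factor $\binom{2j}{j}$ is not a harmless integer multiplier --- it is essential to the divisibility, as identity \eqref{eq:simple} already shows for $l=1$: one has $\sum_{m=k}^{n-1}(2m+1)\binom{m+k}{2k}\binom{2k}{k}=n\binom{n}{k+1}\binom{n+k}{k}$, and the factor of $n$ on the right does not survive if $\binom{2k}{k}$ is removed from the left. Any correct argument must keep the product $\binom{k+j}{2j}\binom{2j}{j}$ intact (in \cite{CG} the key point is a congruence for this product under the reflection $k\mapsto n-1-k$, combined with the odd exponent $2l-1$).

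A secondary problem is the appeal to \cite[Lemmas 2.3 and 2.4]{Mao}. Those lemmas concern the unweighted sums $\sum_{k=0}^{n-1}(\pm 1)^k(2k+1)^{2l-1}$ (which is exactly what the $m=1$ case of Conjecture~\ref{conj:sun-1} reduces to via Chu--Vandermonde); they do not apply ``verbatim'' with an extra polynomial or binomial weight $\binom{k+j}{2j}$, and the counterexample above shows that no such generic extension can hold. Likewise the heuristic that $\sum_{i=0}^{n-1}i^r$ is divisible by $n$ is not correct as stated (e.g.\ $r=1$, $n=2$). So while your first reduction is sound and matches the structure of the cited result, the proposed completion of the proof does not work; you would need to either reproduce the actual argument of \cite{CG} for the weighted sum with the central binomial coefficient attached, or simply cite that congruence as the paper does.
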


\begin{proof}[Proof of Theorem {\rm\ref{thm:main-1}}]
For any non-negative integer $k$, define
\begin{align*}
x_k &={2k\choose k}{x+k\choose 2k}.
\end{align*}
Then the identity \eqref{eq:trans} may be rewritten as
\begin{align}
\sum_{k=0}^{n}{-x-1\choose k}^2{x\choose n-k}^2=\sum_{k=0}^{n}{n+k\choose 2k}{2k\choose k}x_k.  \label{eq:lem2.2}
\end{align}
It is easy to see that $x_0,\ldots,x_n$ are all integers on condition that $x$ is an integer. By Eq.~\eqref{eq:lem2.2} and Lemma \ref{lem:2}, we see that the
polynomial \eqref{eq:main-1} is integer-valued.
\end{proof}

\section{Proof of Theorem \ref{thm:main-2}}
We need the following result, which can be easily proved by induction on $n$.
See also \cite[Eq. (2.4)]{CG}.
\begin{lem}Let $n$ and $k$ be non-negative integers with $k\leqslant n-1$. Then
\begin{align}
\sum_{m=k}^{n-1}(2m+1){m+k\choose 2k}{2k\choose k}=n{n\choose k+1}{n+k\choose k}. \label{eq:simple}
\end{align}
\end{lem}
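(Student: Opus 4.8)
The plan is to prove \eqref{eq:simple} by induction on $n$ with $k$ held fixed, exactly as the lemma indicates. Write $L(n,k)$ for the left-hand side and $R(n,k)=n\binom{n}{k+1}\binom{n+k}{k}$ for the right-hand side. Since the factor $\binom{2k}{k}$ is independent of the summation index $m$, the whole statement is really about the simpler sum $\sum_{m=k}^{n-1}(2m+1)\binom{m+k}{2k}$; it is convenient to keep the common factor $\binom{n+k}{2k}\binom{2k}{k}$ in sight throughout.

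First I would dispose of the base case $n=k+1$, the smallest admissible value. Here the left sum collapses to its single term $m=k$, namely $(2k+1)\binom{2k}{2k}\binom{2k}{k}=(2k+1)\binom{2k}{k}$, while the right-hand side equals $(k+1)\binom{k+1}{k+1}\binom{2k+1}{k}=(k+1)\binom{2k+1}{k}$. A one-line factorial manipulation gives $(k+1)\binom{2k+1}{k}=(2k+1)\binom{2k}{k}=\frac{(2k+1)!}{k!\,k!}$, settling the base case.

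For the inductive step, assume $L(n,k)=R(n,k)$. Because $L(n+1,k)=L(n,k)+(2n+1)\binom{n+k}{2k}\binom{2k}{k}$, it suffices to verify the one-term recurrence
\[
R(n+1,k)-R(n,k)=(2n+1)\binom{n+k}{2k}\binom{2k}{k}.
\]
A short factorial computation rewrites both quantities as multiples of the common factor $\binom{n+k}{2k}\binom{2k}{k}$:
\[
R(n,k)=\frac{n(n-k)}{k+1}\binom{n+k}{2k}\binom{2k}{k},\qquad
R(n+1,k)=\frac{(n+1)(n+1+k)}{k+1}\binom{n+k}{2k}\binom{2k}{k}.
\]
Cancelling this factor reduces the recurrence to the polynomial identity
\[
n(n-k)+(2n+1)(k+1)=(n+1)(n+1+k),
\]
both sides of which expand to $n^2+nk+2n+k+1$.

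The only real work is the factorial bookkeeping that puts $R(n,k)$ and $R(n+1,k)$ over the common factor $\binom{n+k}{2k}\binom{2k}{k}$; once those closed forms are in hand, everything else is an immediate algebraic check, so I do not expect any genuine obstacle. Should one wish to bypass induction altogether, the same closed form $R(n,k)=\frac{n(n-k)}{k+1}\binom{n+k}{2k}\binom{2k}{k}$ can be verified directly by telescoping, since its forward difference in $n$ equals precisely $(2n+1)\binom{n+k}{2k}\binom{2k}{k}$; but the induction on $n$ stated in the lemma is the most economical route.
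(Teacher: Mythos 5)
Your proof is correct, and it follows exactly the route the paper itself indicates (the paper merely states that the lemma "can be easily proved by induction on $n$" without giving details). All of your factorial reductions and the final polynomial identity $n(n-k)+(2n+1)(k+1)=(n+1)(n+1+k)$ check out, and the base case $n=k+1$ is handled correctly.
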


\begin{proof}[Proof of Theorem {\rm\ref{thm:main-2}}]
Using the identities \eqref{eq:trans} and \eqref{eq:simple}, we have
\begin{align*}
\sum_{m=0}^{n-1}(2m+1)\sum_{k=0}^{m}{-x-1\choose k}^2{x\choose n-k}^2
&=\sum_{m=0}^{n-1}(2m+1)\sum_{k=0}^{m}{n+k\choose 2k}{2k\choose k}^2{x+k\choose 2k} \\
&=\sum_{k=0}^{n-1}n{n\choose k+1}{n+k\choose k}{2k\choose k}{x+k\choose 2k}.
\end{align*}
It follows that the expression \eqref{n2-1} can be written as
\begin{align}
\sum_{k=0}^{n-1}\frac{1}{n}{n\choose k+1}{n+k\choose k}{2k\choose k}{x+k\choose 2k}
=\sum_{k=0}^{n-1}\frac{1}{k+1}{n-1\choose k}{n+k\choose k}{2k\choose k}{x+k\choose 2k}. \label{eq:xx+1}
\end{align}
Since $\frac{1}{k+1}{2k\choose k}={2k\choose k}-{2k\choose k-1}$ is clearly an integer (the $n$-th Catalan number),
we conclude that the right-hand side of \eqref{eq:xx+1} is also an integer whenever $x$ is an integer. This proves the theorem.
\end{proof}

\section{Concluding remarks}
Z.-W. Sun \cite[Conjecture 35(ii)]{Sun19} conjectured that,
for all positive integers $l$ and $n$, the polynomial
\begin{align*}
\frac{(2l-1)!!}{n^2}\sum_{k=0}^{n-1}(2k+1)^{2l-1}\sum_{j=0}^{k}{-x-1\choose j}^2{x\choose k-j}^2
\end{align*}
is integer-valued. Here $(2l-1)!!=(2l-1)(2l-3)\cdots3\cdot 1$.

We believe that the following (stronger) result is true.
\begin{conj}Let $l$ and $n$ be positive integers and $k$ a non-negative integer with $k\leqslant n-1$. Then
\begin{align}
(2l-1)!!\sum_{m=k}^{n-1}(2m+1)^{2l-1}{m+k\choose 2k}{2k\choose k}^2 \equiv 0\pmod{n^2}. \label{eq:final}
\end{align}
\end{conj}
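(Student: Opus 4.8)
My plan is to expand the inner summand in a convenient polynomial basis, sum it against the closed form coming from \eqref{eq:simple}, and thereby reduce \eqref{eq:final} to the integrality of an explicit family of connection coefficients; the arithmetic of those coefficients is where the real work lies. First I would record the ``central factorial'' form of the summand. Writing $u=2m+1$, a direct factorization gives
\[
\binom{m+k}{2k}=\frac{1}{4^{k}(2k)!}\prod_{j=1}^{k}\bigl(u^{2}-(2j-1)^{2}\bigr),
\]
so $(2m+1)^{2l-1}\binom{m+k}{2k}$ equals $u$ times a polynomial of degree $k+l-1$ in $u^{2}$. Since the $\binom{m+k'}{2k'}$ ($k'=0,1,2,\dots$) form a basis for the polynomials in $u^{2}$, there are rational constants $c_{k'}=c_{k'}(k,l)$, independent of $m$, with $(2m+1)^{2l-1}\binom{m+k}{2k}=\sum_{k'=0}^{k+l-1}c_{k'}(2m+1)\binom{m+k'}{2k'}$. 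In parallel I would rewrite \eqref{eq:simple} so as to display the factor $n^{2}$:
\[
\sum_{m=k'}^{n-1}(2m+1)\binom{m+k'}{2k'}\binom{2k'}{k'}=\frac{n^{2}}{(k'+1)!\,k'!}\prod_{j=1}^{k'}(n^{2}-j^{2})=\frac{n^{2}}{k'+1}\binom{n-1}{k'}\binom{n+k'}{k'}.
\]

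Next I would sum the basis expansion over $0\le m\le n-1$ (terms with $m<k'$ vanish) and substitute the displayed closed form; every summand then carries the factor $n^{2}$. Multiplying by $(2l-1)!!\binom{2k}{k}^{2}$ and dividing by $n^{2}$ yields the polynomial identity
\[
\frac{(2l-1)!!}{n^{2}}\sum_{m=k}^{n-1}(2m+1)^{2l-1}\binom{m+k}{2k}\binom{2k}{k}^{2}=\sum_{k'=0}^{k+l-1}e_{k'}\binom{n-1}{k'}\binom{n+k'}{k'},
\]
where, after simplifying the divided-difference formula for $c_{k'}$ at the nodes $(2i+1)^2$,
\[
e_{k'}=\frac{(2l-1)!!\,\binom{2k}{k}^{2}}{(k'+1)(2k'+1)\binom{2k'}{k'}}\,\Sigma,\qquad \Sigma:=\sum_{i=k}^{k'}(-1)^{k'-i}(2i+1)^{2l-1}\binom{i+k}{2k}\binom{2k'+1}{k'-i}.
\]
Because each $\binom{n-1}{k'}\binom{n+k'}{k'}$ is integer-valued in $n$, it suffices to prove that every coefficient $e_{k'}$ is an integer, and this is the heart of the matter. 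As a check, for $l=1$ one finds $\Sigma=0$ when $k'>k$ and $e_{k}=\tfrac{1}{k+1}\binom{2k}{k}$, recovering the proof of Theorem~\ref{thm:main-2}.

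Before attacking $e_{k'}$ I would exploit a structural feature of $\Sigma$. The function $g(i):=(2i+1)^{2l-1}\binom{i+k}{2k}$ is an odd polynomial in $2i+1$, while $\binom{2k'+1}{k'-i}$ is invariant under $i\mapsto -1-i$; a short computation then shows the whole summand is symmetric under $i\mapsto -1-i$, and the middle indices $-k\le i\le k-1$ contribute zero. Extending the range to $-k'-1\le i\le k'$ therefore doubles $\Sigma$ and presents it as a complete $(2k'+1)$-st finite difference of $g$. In particular $\Sigma=0$ as soon as $2k'+1>\deg g=2k+2l-1$, which matches the range of the expansion, and this finite-difference form is the natural vehicle for the divisibility estimates below.

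The main obstacle is the integrality of $e_{k'}$, equivalently the divisibility
\[
(k'+1)(2k'+1)\binom{2k'}{k'}\ \Big|\ (2l-1)!!\,\binom{2k}{k}^{2}\,\Sigma,
\]
which I would prove one prime $p$ at a time using Kummer's and Legendre's valuation formulas. For $p=2$ the factors $(2l-1)!!$ and $(2i+1)^{2l-1}$ are odd and play no role, so the required power of $2$ must come from $\binom{2k}{k}^{2}$, $\binom{i+k}{2k}$ and $\binom{2k'+1}{k'-i}$ against the denominator, with the finite-difference representation of $\Sigma$ supplying the extra $2$-divisibility. For odd $p$ the roles reverse: the denominator $(k'+1)(2k'+1)\binom{2k'}{k'}$ may be $p$-divisible, and the cancellation is driven by the factors $(2i+1)^{2l-1}$, of large $p$-adic valuation whenever $p\mid 2i+1$, together with $v_{p}\bigl((2l-1)!!\bigr)$. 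Making this uniform in $i$ is exactly where $(2l-1)!!$ is indispensable, and I expect it to be the crux. I note that the tempting shortcut---inducting on $l$ through the identity $(2m+1)^{2}\binom{m+k}{2k}=4(2k+1)(2k+2)\binom{m+k+1}{2k+2}+(2k+1)^{2}\binom{m+k}{2k}$---does not close, since propagating the hypothesis from level $k+1$ to level $k$ introduces a stray factor $\tfrac{1}{2k+1}$ that the prefactor $(2l-1)$ cannot absorb; this is precisely why a direct $p$-adic analysis of $e_{k'}$ appears to be necessary.
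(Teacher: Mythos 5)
You should be aware that the statement you are proving is left \emph{open} in the paper: the author proves it only for $l=1$ (that case is exactly the computation leading to \eqref{eq:xx+1} in the proof of Theorem~\ref{thm:main-2}) and explicitly records the general case as a conjecture. Your reduction is sound as far as it goes, and it is the natural extension of the paper's $l=1$ argument: writing $(2m+1)^{2l-1}\binom{m+k}{2k}$ in the basis $(2m+1)\binom{m+k'}{2k'}$, $0\leqslant k'\leqslant k+l-1$, summing over $m$ via \eqref{eq:simple}, and pulling out the factor $n^2$ correctly converts \eqref{eq:final} into the statement that each coefficient $e_{k'}$ is an integer; your $l=1$ sanity check ($e_k=\frac{1}{k+1}\binom{2k}{k}$, all other $e_{k'}=0$) confirms the bookkeeping, and small cases such as $l=2$, $k=0,1$ do give integer values of $e_{k'}$.

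The genuine gap is that the integrality of $e_{k'}$ --- the divisibility $(k'+1)(2k'+1)\binom{2k'}{k'}\mid(2l-1)!!\,\binom{2k}{k}^{2}\,\Sigma$ --- is never proved. What you offer for it is a programme, not an argument: for $p=2$ you assert that the finite-difference representation of $\Sigma$ ``supplies the extra $2$-divisibility'' without quantifying how much $2$-divisibility a $(2k'+1)$-st difference of $g$ actually carries against $v_2\bigl((k'+1)\binom{2k'}{k'}\bigr)$, and for odd $p$ you only say that the large valuations of $(2i+1)^{2l-1}$ at indices with $p\mid 2i+1$ together with $v_p\bigl((2l-1)!!\bigr)$ ``drive the cancellation,'' while conceding that making this uniform in $i$ is the crux. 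Since everything before this point is elementary rewriting, the entire content of the conjecture has been relocated into the unproved claim about $e_{k'}$; the proposal therefore establishes nothing beyond the $l=1$ case already in the paper. If you can carry out the $p$-adic analysis you sketch, that would be a genuine contribution (indeed it would prove slightly more than \eqref{eq:final}, since integrality of every $e_{k'}$ is sufficient but a priori stronger), but as written the proof is incomplete at its decisive step.
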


Our proof of Theorem \ref{thm:main-2} implies that the above conjecture is true for $l=1$.
In view of \eqref{eq:trans}, Sun's conjecture follows from \eqref{eq:final} too.

Recently, $q$-analogues of congruences have been studied by many authors. See \cite{GG,Guo-m3,GL,GS,GuoZu}
and references therein. For $l=1$, we have a $q$-analogue of \eqref{eq:final} as follows:
\begin{align}
\sum_{m=k}^{n-1}[2m+1]{m+k\brack 2k}{2k\brack k}^2 q^{-(k+1)m} \equiv 0\pmod{[n]^2}, \label{eq:q-sun}
\end{align}
where $[n]=1+q+\cdots+q^{n-1}$ is the $q$-integer and ${n\brack k}=\prod_{j=1}^{k}(1-q^{n-k+j})/(1-q^j)$
denotes the $q$-binomial coefficient. The proof of \eqref{eq:q-sun} is similar to that of Theorem \ref{thm:main-2}.
Nevertheless, we cannot found any $q$-analogue of \eqref{eq:final} for $l>1$.

\end{document}